\documentclass[leqno,12pt]{amsart} 
\setlength{\textheight}{23cm}
\setlength{\textwidth}{16cm}
\setlength{\oddsidemargin}{0cm}
\setlength{\evensidemargin}{0cm}
\setlength{\topmargin}{0cm}
\usepackage{amssymb}
%
%
%
\theoremstyle{plain} 
\newtheorem{theorem}{\indent\sc Theorem}[section]
\newtheorem{lemma}[theorem]{\indent\sc Lemma}
\newtheorem{corollary}[theorem]{\indent\sc Corollary}
\newtheorem{proposition}[theorem]{\indent\sc Proposition}

\theoremstyle{definition} 

\newtheorem{remark}[theorem]{\indent\sc Remark}

%

%

\def\C{{\mathbf{C}}}
\def\R{{\mathbf{R}}}
\def\Z{{\mathbf{Z}}}
\def\Pi{{\mathbf{P}}}
\def\Si{{\mathbf{S}}}

\begin{document}

\title[complete minimal Lagrangian surfaces]
{The Gauss map and total curvature of complete minimal Lagrangian surfaces in the complex two-space} 

\author[R.~Aiyama]{Reiko Aiyama}

\author[K.~Akutagawa]{Kazuo Akutagawa} 

\author[Y.~Kawakami]{Yu Kawakami} 

\renewcommand{\thefootnote}{\fnsymbol{footnote}}
\footnote[0]{2010\textit{ Mathematics Subject Classification}.
Primary 53C42, 53D10; Secondary 30D35, 53D12.}
\keywords{ 
Lagrangian surface, Gauss map, total curvature, exceptional value.
}
\thanks{
The second author is supported in part by the Grant-in-Aid for Challenging Exploratory Research, 
Japan Society for the Promotion of Science, No.~24654009.}
\thanks{
The third author is supported by the Grant-in-Aid for Young Scientists (B), Japan Society for the 
Promotion of Science,  No. 24740044.} 
\address{
Institute of Mathematics, \endgraf
University of Tsukuba, \endgraf
Tsukuba 305-8571, Japan
}
\email{aiyama@math.tsukuba.ac.jp}

\address{
Department of Mathematics, \endgraf
Tokyo Institute of Technology, \endgraf
Tokyo 152-8551, Japan
}
\email{akutagawa@math.titech.ac.jp}

\address{
Graduate School of Natural Science and Technology, \endgraf
Kanazawa university, \endgraf
Kanazawa, 920-1192, Japan
}
\email{y-kwkami@se.kanazawa-u.ac.jp}

\maketitle

\begin{abstract}
The purpose of this paper is to reveal the relationship 
between the total curvature and the global behavior of the Gauss map of a complete minimal 
Lagrangian surface in the complex two-space. To achieve this purpose, 
we show the precise maximal number of exceptional values of the Gauss map for a complete minimal Lagrangian 
surface with finite total curvature in the complex two-space. 
Moreover, we prove that if the Gauss map of a complete minimal Lagrangian 
surface which is not a Lagrangian plane omits three values, then it takes all other values 
infinitely many times. 
\end{abstract}

\section{Introduction}
There are many similarities between surfaces in Euclidean $3$-space ${\R}^{3}$ and Lagrangian surfaces 
in the complex $2$-space ${\C}^{2}$, in particular, the case of minimal surfaces. 
In fact, there exists a representation for a minimal Lagrangian surface $M (\subset {\C}^{2})$ in terms of 
holomorphic data, similar to Weierstrass representation for a minimal surface in ${\R}^{3}$ (cf. \cite{Os1986}). 
Moreover, the Gauss map $g$ of $M$ is a holomorphic map to the unit $2$-sphere $S^{2}$. 
On the representation for $M$, Chen-Morvan \cite{CM1987} proved that there exists an explicit correspondence 
in ${\C}^{2}$ between minimal Lagrangian surfaces and holomorphic curves with a nondegenerate condition. 
Indeed, this correspondence is given by exchanging the orthogonal complex structure $J$ in ${\C}^{2}$ to another one 
on ${\R}^{4}={\C}^{2}$. For the complete case, this result can also be proved from \cite[Theorem II]{Mi1984} and 
the well-known fact \cite{HL1982} that any minimal Lagrangian submanifold in ${\C}^{n}$ is stable. 
More generally, H\'elein-Romon \cite{HR2000, HR2002} and the first author \cite{Ai2001, Ai2004} 
proved that every Lagrangian surface $S$ in ${\C}^{2}$, not necessarily minimal, is represented in terms of 
a plus spinor (or a minus spinor) of the $\text{spin}^{\C}$ bundle $(\underline{\C}_S\oplus \underline{\C}_S)\oplus (K^{-1}_{S}\oplus K_{S})$ 
satisfying the Dirac equation with potential (see \cite[Section\,1]{Ai2004} for details). 
Here, $\underline{\C}_S$ and $K_{S}$ denote respectively the trivial complex line bundle and the canonical complex line bundle of $S$. 
Remark that the representation in terms of plus spinors in $\Gamma (\underline{\C}_S\oplus \underline{\C}_S) = \Gamma (S\times {\C}^{2})$ given by 
the first author is a natural generalization of the one given by Chen-Morvan. Combining these results, we get the following: 

\begin{theorem}{$($\cite{CM1987}, \cite{Ai2001, Ai2004}$)$}\label{thm:Weierstrass}
Let $M$ be a Riemann surface with an isothermal coordinate $z=u+\mathbf{i}\, v$ around each point. Let $F = (F_{1}, F_{2})\colon M\to 
{\C}^{2}$ be a holomorphic map satisfying $|S_{1}|^{2}+|S_{2}|^{2}\not= 0$ everywhere on $M$, 
where $S_{1}:= (F_{2})'_{z} = dF_2/dz$ and $S_{2}:= - (F_{1})'_{z} = - dF_1/dz$. Then 
\begin{equation}\label{eq:Weierstrass}
f=\dfrac{1}{\sqrt{2}}e^{\mathbf{i}\,\beta/2}(F_{1}-\mathbf{i}\, \overline{F_{2}}, F_{2}+\mathbf{i}\, \overline{F_{1}}) 
\end{equation}
is a minimal Lagrangian conformal immersion from $M$ to ${\C}^{2}$ with constant Lagrangian angle $\beta \in {\R}/2\pi\Z$. 
The induced metric $ds^{2}$ on $M$ by $f$ and its Gaussian curvature $K_{ds^{2}}$ are respectively given by 
\begin{equation}\label{eq:metric-curvature}
ds^{2}=(|S_{1}|^{2}+|S_{2}|^{2})|dz|^{2}, \qquad K_{ds^{2}}=-2\dfrac{|S_{1}(S_{2})_{z}-S_{2}(S_{1})_{z}|}{(|S_{1}|^{2}+|S_{2}|^{2})^{3}}.\end{equation}
The Gauss map $g$ of $M$ is also given by 
\begin{equation}\label{eq:gauss-map}
g=[-S_{2} : S_{1}]=(-S_{2}/S_{1})\colon M\to \C P^{1}=\C\cup \{\infty \}\simeq S^{2}. 
\end{equation}
Here, by the identification of $S^{2}$ with $S^{2}(1)\times \{(e^{\mathbf{i}\beta}, 0)\}\subset {\R}^{3}\times {\R}^{3}$, 
$g$ can be regarded as the generalized Gauss map of $F (M)$ in ${\R}^{4}={\C}^{2}$ $(${\rm cf. \cite{HO1980}, \cite{HO1985}}$)$. 
Conversely, every minimal Lagrangian immersion $f\colon M\to {\C}^{2}$ with constant Lagrangian angle $\beta$ is congruent 
with the one constructed as above. 
\end{theorem}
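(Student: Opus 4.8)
The plan is to verify the two halves of the theorem by direct computation in the conformal coordinate $z=u+\mathbf{i}\,v$, exploiting that \eqref{eq:Weierstrass} exhibits $f$ as the sum of the holomorphic map $\frac{1}{\sqrt 2}e^{\mathbf{i}\beta/2}(F_1,F_2)$ and the antiholomorphic map $\frac{1}{\sqrt 2}e^{\mathbf{i}\beta/2}(-\mathbf{i}\,\overline{F_2},\mathbf{i}\,\overline{F_1})$. For the first assertion I would differentiate, using $(F_1)_z=-S_2$, $(F_2)_z=S_1$ and $(\overline{F_k})_z=\overline{(F_k)_{\bar z}}=0$, to get
\[
f_z=\frac{1}{\sqrt 2}\,e^{\mathbf{i}\beta/2}\,(-S_2,\,S_1),\qquad
f_{\bar z}=-\frac{\mathbf{i}}{\sqrt 2}\,e^{\mathbf{i}\beta/2}\,(\overline{S_1},\,\overline{S_2}),
\]
both regarded as $\C^2$-valued. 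Conformality of $f$ is the identity $\langle f_z,f_{\bar z}\rangle=0$ for the Hermitian inner product $\langle\,\cdot,\cdot\,\rangle$ of $\C^2$, and on substituting these two expressions the terms cancel. The induced metric is then $ds^2=(|f_z|^2+|f_{\bar z}|^2)\,|dz|^2$ with $|f_z|^2=|f_{\bar z}|^2=\frac{1}{2}(|S_1|^2+|S_2|^2)$, which is the metric in \eqref{eq:metric-curvature} and is nondegenerate by the hypothesis $|S_1|^2+|S_2|^2\neq 0$, so $f$ is a conformal immersion; it is minimal because each of its components is harmonic ($f_{z\bar z}=0$) and $\C^2=\R^4$ is flat. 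The Gaussian curvature follows from $K_{ds^2}=-\frac{1}{2\rho}\Delta\log\rho$ with $\rho=|S_1|^2+|S_2|^2$, using holomorphicity of $S_1,S_2$ and the Lagrange identity $|v|^2|v'|^2-|\langle v,v'\rangle|^2=|S_1(S_2)_z-S_2(S_1)_z|^2$ for $v=(S_1,S_2)$ and $v'=((S_1)_z,(S_2)_z)$.

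Next I would treat the Lagrangian property and the angle. Writing $f=(f_1,f_2)$ with $f_1,f_2\colon M\to\C$, the pullback of the Kähler form $\omega$ of $\C^2$ is $f^*\omega=\frac{\mathbf{i}}{2}\sum_{k=1}^{2}df_k\wedge d\overline{f_k}=\frac{\mathbf{i}}{2}\bigl(|f_z|^2-|f_{\bar z}|^2\bigr)\,dz\wedge d\bar z$, which vanishes by the norm equality just obtained, so $f$ is Lagrangian. For the Lagrangian angle I would pull back the holomorphic volume form $\Omega$ of $\C^2$, so $f^*\Omega=df_1\wedge df_2$, and evaluate it on the oriented orthonormal frame $(f_u/\lambda,f_v/\lambda)$ with $\lambda^2=|S_1|^2+|S_2|^2$; the $|S_1|^2$ and $|S_2|^2$ contributions combine and one finds $\Omega(f_u/\lambda,f_v/\lambda)=e^{\mathbf{i}\beta}$, a constant, so the Lagrangian angle of $f$ is identically $\beta$ (as expected, minimality of a Lagrangian immersion being equivalent to constancy of its Lagrangian angle). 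Finally, the holomorphic curve $F\colon M\to\C^2$ has tangent direction $[F_z]=[-S_2:S_1]=-S_2/S_1$; in the Hoffman-Osserman picture (\cite{HO1980,HO1985}) of the generalized Gauss map of a surface in $\R^4$ as an $S^2\times S^2$-valued map, one factor is the constant point fixed by the ambient complex structure — placed at $(e^{\mathbf{i}\beta},0)$ because the Chen-Morvan correspondence replaces $J$ by the $\beta$-rotated orthogonal complex structure — and the other factor is exactly $g=-S_2/S_1$, which under the same rotation is also the Gauss map of $f(M)$; this is \eqref{eq:gauss-map}.

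For the converse, let $f=(f_1,f_2)\colon M\to\C^2$ be a minimal Lagrangian immersion with constant Lagrangian angle $\beta$. Minimality gives $f_{z\bar z}=0$, so $f_z$ is holomorphic and $f_{\bar z}$ antiholomorphic; conformality gives $\langle f_z,f_{\bar z}\rangle=0$ and the Lagrangian condition gives $|f_z|=|f_{\bar z}|$, exactly as above. Since the $\C^2$-valued vector $f_z=((f_1)_z,(f_2)_z)$ is nowhere zero, its Hermitian orthocomplement is the complex line spanned by $\bigl(-\overline{(f_2)_z},\,\overline{(f_1)_z}\bigr)$, so $f_{\bar z}=\mu\bigl(-\overline{(f_2)_z},\,\overline{(f_1)_z}\bigr)$ for a globally defined function $\mu$, and $|f_z|=|f_{\bar z}|$ forces $|\mu|\equiv 1$; expressing $\Omega(f_u,f_v)$ through $\mu$ identifies $\mu$ with a unit multiple of the exponential of the Lagrangian angle, hence with the constant $\mathbf{i}\,e^{\mathbf{i}\beta}$. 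Equivalently $(f_1)_{\bar z}=-\mathbf{i}\,e^{\mathbf{i}\beta}\,\overline{(f_2)_z}$ and $(f_2)_{\bar z}=\mathbf{i}\,e^{\mathbf{i}\beta}\,\overline{(f_1)_z}$. I would then set
\[
F_1:=\frac{1}{\sqrt 2}\bigl(e^{-\mathbf{i}\beta/2}f_1+\mathbf{i}\,e^{\mathbf{i}\beta/2}\,\overline{f_2}\bigr),\qquad
F_2:=\frac{1}{\sqrt 2}\bigl(e^{-\mathbf{i}\beta/2}f_2-\mathbf{i}\,e^{\mathbf{i}\beta/2}\,\overline{f_1}\bigr),
\]
the explicit inversion of \eqref{eq:Weierstrass}: the two relations above give $(F_1)_{\bar z}=(F_2)_{\bar z}=0$, so $F=(F_1,F_2)$ is holomorphic; moreover $S_1:=(F_2)_z$ and $S_2:=-(F_1)_z$ satisfy $|S_1|^2+|S_2|^2=2|f_z|^2>0$ everywhere, and substituting $F$ back into \eqref{eq:Weierstrass} returns $f$. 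Hence $f$ is congruent to a surface constructed as in the theorem.

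The routine part is the bookkeeping of the constants $\frac{1}{\sqrt 2}$ and $e^{\pm\mathbf{i}\beta/2}$ and the translation between the real and the complex forms of conformality and of the Lagrangian condition. The step I expect to be the main obstacle is, in the converse, pinning down the unimodular factor $\mu$ as exactly the exponential of the Lagrangian angle — so that its being constant is precisely what makes the candidate data $F_1,F_2$ holomorphic — and checking that all normalizations match \eqref{eq:Weierstrass} exactly; a secondary subtlety is making the Gauss-map statement rigorous, which rests on the Hoffman-Osserman splitting of the quadric of null lines in $\C P^3$ as $S^2\times S^2$ and on the Chen-Morvan reading of the role of $\beta$.
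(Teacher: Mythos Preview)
The paper does not supply its own proof of this theorem: it is stated as a known representation result, attributed to Chen--Morvan \cite{CM1987} and to the first author \cite{Ai2001,Ai2004}, and the exposition moves directly to the remark reformulating the data as $(h\,dz,g)$. There is therefore no argument in the paper to compare your proposal against.

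On its own merits your plan is the natural direct verification and is sound. The identifications
\[
f_z=\tfrac{1}{\sqrt 2}e^{\mathbf{i}\beta/2}(-S_2,S_1),\qquad f_{\bar z}=-\tfrac{\mathbf{i}}{\sqrt 2}e^{\mathbf{i}\beta/2}(\overline{S_1},\overline{S_2})
\]
are correct, and for a map into $\C^2\cong\R^4$ the Hermitian condition $\langle f_z,f_{\bar z}\rangle=0$ is indeed equivalent to conformality, while $|f_z|=|f_{\bar z}|$ is equivalent to $f^*\omega=0$; the curvature computation via the Lagrange identity is also right (and in fact shows that the numerator in \eqref{eq:metric-curvature} should be $|S_1(S_2)_z-S_2(S_1)_z|^{2}$, consistently with \eqref{eq:metric-curvature2}). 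In the converse, your determination of the unimodular factor goes through: writing $f_{\bar z}=\mu\bigl(-\overline{(f_2)_z},\overline{(f_1)_z}\bigr)$ one finds $f^*\Omega=-2\mathbf{i}\,\mu\,|f_z|^2\,du\wedge dv$, hence $\Omega(e_1,e_2)=-\mathbf{i}\mu$ on the orthonormal frame, so $\mu=\mathbf{i}\,e^{\mathbf{i}\beta}$ exactly as you claim, and then $(F_1)_{\bar z}=(F_2)_{\bar z}=0$ follows. The Gauss-map identification you sketch is the standard Hoffman--Osserman picture, with the constant $S^2$-factor recording the choice of orthogonal complex structure (hence the $\beta$); this is precisely what the paper alludes to in the sentence following \eqref{eq:gauss-map}.
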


\begin{remark}
Set a holomorphic $1$-form $hdz:=S_{1}dz$ on $M$. In terms of the Weierstrass data $(hdz, g)$ of $M$, 
the induced metric $ds^{2}$ and its Gaussian curvature $K_{ds^{2}}$ can be rewritten respectively by 
\begin{equation}\label{eq:metric-curvature2}
ds^{2}=|h|^{2}(1+|g|^{2})|dz|^{2}, \qquad K_{ds^{2}}=-\dfrac{2|g'_{z}|^{2}}{|h|^{2}(1+|g|^{2})^{3}}. 
\end{equation}
Moreover, the minimal Lagrangian immersion $f\colon M\to {\C}^{2}$ is also given by 
\begin{equation}\label{eq:Weierstrass2}
f = \dfrac{1}{\sqrt{2}}e^{\mathbf{i}\,\beta/2}\biggl{(}\int ghdz - \mathbf{i}\int \overline{hdz},\ 
\int hdz + \mathbf{i} \int \overline{ghdz} \biggr{)} \qquad \ \ 
\end{equation}
\begin{equation*} 
= \dfrac{1}{\sqrt{2}}e^{\mathbf{i}\,\beta/2}\biggl{(}- \int S_2dz - \mathbf{i}\int \bar{S_1}d\bar{z},\ 
\int \overline{\bar{S_1}d\bar{z}} + \mathbf{i} \int \overline{S_2dz} \biggr{)}
\end{equation*}
for $(\bar{S_1}d\bar{z}, S_2dz) \in \Gamma (K^{-1}_{S}\oplus K_{S})$, 
which is essentially same as the one proved by H\'elein-Romon \cite{HR2000}. 
On the other hand, the induced metric $d\tilde{s}^{2}$ and its Gaussian curvature $K_{d\tilde{s}^{2}}$ of 
a minimal surface in ${\R}^{3}$ associated with the Weierstrass data $(hdz, g)$ are given respectively by (cf. \cite{Os1986}) 
\begin{equation}\label{eq:Weierstrass-minimal}
d\tilde{s}^{2}=|h|^{2}(1+|g|^{2})^{2}|dz|^{2}, \qquad K_{d\tilde{s}^{2}}=-\dfrac{4|g'_{z}|^{2}}{|h|^{2}(1+|g|^{2})^{4}}.
\end{equation}
\end{remark}

With these understanding, 
for each $m \in {\bf N}$ and a Weierstrass data $(hdz, g)$ on an open Riemann surface $M$, 
one can consider the conformal metric $ds^2_m := |h|^{2}(1+|g|^{2})^m|dz|^{2}$ on $M$. 
In fact, the third author \cite{Ka2013} has studied the precise maximal number of exceptional values of $g$, provided that $g$ is not constant and that $ds^2_m$ is complete. 
Here, we call a value that a function or map never takes an {\it exceptional value} 
of the function or map.

The purpose of the present paper is to reveal the relationship between the total curvature and the global behavior of the Gauss map of 
a complete minimal Lagrangian surface in ${\C}^{2}$. 
Our main theorems are Theorems \ref{thm:KKM} and \ref{great-Picard1} in Section 2. 
The paper is organized as follows: In Section \ref{section2}, we first give a curvature bound for a minimal Lagrangian surface 
in ${\C}^{2}$ (Theorem \ref{thm:exceptional1}) and show that the precise maximal number of exceptional values of the Gauss map 
of a complete minimal Lagrangian surface in ${\C}^{2}$ is ``$3$''(Corollary \ref{cor:exceptional1}). 
These results follow from directly Theorem 2.1 and Proposition 2.4 in \cite{Ka2013}. 
Next, we give the upper bound for the number of exceptional values of the Gauss map of a complete minimal 
Lagrangian surface with finite total curvature in ${\C}^{2}$ (Theorem \ref{thm:KKM}). In particular, we show that 
the precise maximal number of exceptional values of the Gauss map for this class is ``$2$''. The proof is given in 
Section \ref{section3-1}. 
Moreover, by refining the Mo-Osserman argument in \cite{MO1990}, we prove that if the Gauss map of a complete minimal 
Lagrangian surface in ${\C}^{2}$ takes on ``$4$'' distinct values only a finite number of times, then the surface has finite 
total curvature (Theorem \ref{great-Picard1}). The proof is given in Section \ref{section3-2}. As a corollary of the result, 
we obtain that if the Gauss map of a complete minimal Lagrangian surface in ${\C}^{2}$ which is not a Lagrangian plane omits 
``$3$'' values, then it takes all other values infinitely many times (Corollary \ref{great-Picard2}). 
This result is a sharping of Corollary \ref{cor:exceptional1}. 

Finally, the authors would like to thank Professors Katsuei Kenmotsu, Reiko Miyaoka, Masaaki Umehara and Kotaro Yamada 
for useful advice and continuous encouragements. 

\section{Main results}\label{section2}
We first give a curvature bound for a minimal Lagrangian surface $M$ in ${\C}^{2}$. 
Set $\omega :=hdz$. Then the induced metric $ds^{2}$ on $M$ by $f$ 
is rewritten by 
\begin{equation}\label{eq:metric3}
ds^{2}=|h|^{2}(1+|g|^{2})|dz|^{2}=(1+|g|^{2})|\omega|^{2}. 
\end{equation}
Applying Theorem 2.1 in \cite{Ka2013} to the metric $ds^{2}$, we can get the following theorem. 

\begin{theorem}\label{thm:exceptional1}
Let $M$ be a minimal Lagrangian surface in ${\C}^{2}$ whose Gauss map $g\colon M\to \C\cup \{\infty \}$ omits more than three 
distinct values. Then there exists a positive constant $C$ depending on the set of exceptional values, but not $M$, such that for all 
$p\in M$ we have 
$$
|K_{ds^{2}}(p)|^{1/2}\leq \dfrac{C}{d(p)}, 
$$
where $K_{ds^{2}}(p)$ stands for the Gaussian curvature of $M$ at $p$ 
and $d(p)$ stands for the geodesic distance from $p$ to the boundary of $M$. 
\end{theorem}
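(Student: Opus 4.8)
The plan is to reduce Theorem \ref{thm:exceptional1} directly to the curvature estimate of \cite[Theorem 2.1]{Ka2013} by recognizing the induced metric $ds^2$ in \eqref{eq:metric3} as a member of the family $ds^2_m = |h|^2(1+|g|^2)^m|dz|^2$ treated there, with $m=1$. First I would recall the precise hypothesis and conclusion of \cite[Theorem 2.1]{Ka2013}: for a Weierstrass-type conformal metric $ds^2_m$ on an open Riemann surface $M$ whose "Gauss map" $g$ omits more than $(\text{some bound depending on }m)$ distinct values, one has a pointwise bound $|K_{ds^2_m}(p)|^{1/2}\le C/d(p)$ with $C$ independent of $M$. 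I would verify that for $m=1$ the relevant threshold in that theorem is exactly "more than three," which matches the hypothesis here, and that the quantity appearing on the left in \cite{Ka2013} is the Gaussian curvature $K_{ds^2_m}$, which by \eqref{eq:metric-curvature2} with $m=1$ is precisely $K_{ds^2}$.

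The second step is bookkeeping about the geometric setup. By Theorem \ref{thm:Weierstrass}, a minimal Lagrangian surface $M$ in $\C^2$ carries, near each point, an isothermal coordinate $z$ in which the induced metric is $ds^2 = (1+|g|^2)|\omega|^2$ with $\omega = hdz$ a holomorphic $1$-form and $g$ the Gauss map, holomorphic as a map to $\C\cup\{\infty\}$. Thus the pair $(hdz,g)$ is exactly a Weierstrass data in the sense of \cite{Ka2013}, the nondegeneracy $|S_1|^2+|S_2|^2\ne 0$ guaranteeing that $ds^2$ is a genuine (nondegenerate) conformal metric; and $d(p)$, the geodesic distance to the boundary of $M$ with respect to $ds^2$, is the same notion of distance used in the statement of \cite[Theorem 2.1]{Ka2013}. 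With these identifications in place, applying that theorem to $ds^2 = ds^2_1$ yields the asserted inequality with the constant $C$ depending only on the (finite) set of omitted values of $g$, not on $M$.

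I do not expect a genuine obstacle here: the content is entirely in \cite{Ka2013}, and the work is to check that our $ds^2$ really is the $m=1$ metric there, that "omits more than three values" is the correct sharp hypothesis for $m=1$, and that the curvature and distance appearing in \cite{Ka2013} translate verbatim to $K_{ds^2}$ and $d(p)$ here. The only mild subtlety is that in \cite{Ka2013} the metric $ds^2_m$ is defined using local Weierstrass data $(hdz,g)$, with $hdz$ a priori only locally defined; one should note that the curvature $K_{ds^2}$ and the completeness/distance function depend only on the metric $ds^2$ itself, which is globally well defined on $M$ via \eqref{eq:metric3}, so the local ambiguity in $h$ is irrelevant. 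Granting this, the proof is a one-line citation: apply \cite[Theorem 2.1]{Ka2013} to $ds^2$.
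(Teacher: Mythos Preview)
Your proposal is correct and is exactly the approach the paper takes: the paper simply observes that $ds^{2}=(1+|g|^{2})|\omega|^{2}$ is the case $m=1$ of the metrics $ds^{2}_{m}$ in \cite{Ka2013} and invokes \cite[Theorem~2.1]{Ka2013} directly. Your additional remarks about the global well-definedness of $ds^{2}$ and the meaning of $d(p)$ are consistent with this and do not alter the argument.
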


Combining this with Proposition 2.4 in \cite{Ka2013}, we give the precise maximal number of exceptional values of 
the Gauss map of the case where $M$ is complete. 

\begin{corollary}\label{cor:exceptional1}
The Gauss map of a complete minimal Lagrangian surface in ${\C}^{2}$ which is not a 
Lagrangian plane can omit at most three values. Moreover, let $E$ be an arbitrary set of $q$ points 
on $\C\cup \{\infty \}$, where $q\leq 3$. Then there exists a complete minimal Lagrangian surface in 
${\C}^{2}$ whose image under the Gauss map omits precisely the set $E$. 
\end{corollary}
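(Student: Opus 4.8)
The plan is to separate the statement into the upper bound (``at most three'') and the realization part (every such $E$ occurs), and to handle these by quite different means.

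For the upper bound I would argue by contradiction, using Theorem \ref{thm:exceptional1}. Suppose $M$ is complete, is not a Lagrangian plane, and its Gauss map $g$ omits four or more distinct values. Then Theorem \ref{thm:exceptional1} furnishes a constant $C>0$, independent of $M$, with $|K_{ds^{2}}(p)|^{1/2}\le C/d(p)$ for every $p\in M$. Since $M$ is complete its metric has no boundary at finite distance, i.e.\ $d(p)=+\infty$ for all $p$, and hence $K_{ds^{2}}\equiv 0$; that is, $M$ is flat. By the curvature formula in \eqref{eq:metric-curvature2}, flatness forces $g'_{z}\equiv 0$, so $g$ equals a constant $g_{0}$ on $M$. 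But then, interchanging the roles of $S_{1}$ and $S_{2}$ if $g_{0}=\infty$, one has $S_{2}=-g_{0}S_{1}$, so that $(S_{1},S_{2})$ is a fixed vector times a single holomorphic function; by the representation \eqref{eq:Weierstrass} the immersion $f$ is then an $\R$-affine image of that holomorphic function, and a complete minimal Lagrangian immersion whose image lies in an affine $2$-plane is a Lagrangian plane --- contradicting the hypothesis. (This is exactly Proposition 2.4 of \cite{Ka2013} applied to $ds^{2}=ds^2_1$; cf.\ \eqref{eq:metric3}.)

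For the realization part I would, for each $q\le 3$ and each $q$-point set $E\subset\C\cup\{\infty\}$, exhibit explicit Weierstrass data $(\omega=h\,dz,\,g)$ on an open Riemann surface $M$, writing $S_{1}=h$ and $S_{2}=-hg$, so that (i) $g$ is meromorphic with image exactly $(\C\cup\{\infty\})\setminus E$; (ii) the zero divisor of $\omega$ equals the polar divisor of $g$, so that $|S_{1}|^{2}+|S_{2}|^{2}$ is finite and nowhere zero; and (iii) $ds^{2}=(1+|g|^{2})|h|^{2}|dz|^{2}$ is complete. For $q\le 2$ elementary data suffice: on $M=\C$ one takes $g$ a Möbius transformation (with $\{g(\infty)\}=E$) when $q=1$, $g=\mu(e^{z})$ for a Möbius $\mu$ carrying $\{0,\infty\}$ to the two points of $E$ when $q=2$, and $g=\wp$ with $\omega=\sigma^{2}\,dz$ (Weierstrass functions) when $q=0$, in each case choosing $\omega$ to kill the simple poles of $g$; completeness is then verified directly, since $ds^{2}$ is bounded below by a positive multiple of $|dz|^{2}$ outside a compact set. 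For $q=3$, Picard's theorem forbids such a $g$ on any parabolic surface, so one must pass to $M=\mathbf{D}$: let $g\colon\mathbf{D}\to(\C\cup\{\infty\})\setminus E$ be the universal covering projection --- automatically locally biholomorphic, with image exactly $(\C\cup\{\infty\})\setminus E$ --- pick a holomorphic $h_{0}$ on $\mathbf{D}$ whose zero divisor is the polar divisor of $g$, and then use the remaining freedom $h=h_{0}e^{\psi}$ ($\psi$ holomorphic on $\mathbf{D}$) to arrange (iii).

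The main obstacle is step (iii) in the case $q=3$: one must choose the one-form $\omega$ on the disk so that $(1+|g|^{2})|\omega|^{2}$ dominates a complete metric near $\partial\mathbf{D}$, even though the covering map $g$ returns arbitrarily close to the omitted values infinitely often. This is precisely the point at which the exponent $m=1$ of the metric $ds^2_1=(1+|g|^{2})|h|^{2}|dz|^{2}$, as opposed to the exponent $2$ occurring for minimal surfaces in $\R^{3}$, pins the sharp number down to ``$3$'' rather than ``$4$'', in analogy with Voss's examples in the Euclidean setting. Rather than redo this estimate I would invoke Proposition 2.4 of \cite{Ka2013}, which supplies the completeness of the required examples and, together with Theorem \ref{thm:exceptional1}, the sharpness of the bound.
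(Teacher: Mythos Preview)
Your proposal is correct and takes essentially the same approach as the paper: the paper obtains Corollary~\ref{cor:exceptional1} simply by combining Theorem~\ref{thm:exceptional1} with Proposition~2.4 of \cite{Ka2013}, and you do the same --- spelling out the chain ``complete and omits four values $\Rightarrow d(p)=\infty \Rightarrow K_{ds^2}\equiv 0 \Rightarrow g$ constant $\Rightarrow$ Lagrangian plane'' for the upper bound, and for the realization part sketching Voss-type examples whose completeness (in the critical case $q=3$) you ultimately defer to that same external reference.
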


Note that an example of the case where $E=\emptyset$ (i.e., $q=0$) is given by the following: 
$$
F=(F_{1}, F_{2})\colon \C \to {\C}^{2}, \quad (F_{1}, F_{2})= \biggl{(}\dfrac{z^{3}}{3}+z+c_{1},\, \dfrac{z^{2}}{2}+c_{2} \biggr{)} 
\quad (c_{1}, c_{2}\in \C). 
$$
Indeed, the Weierstrass data is $(hdz, g)=(zdz, (z+1)/z)$ and the resulting surface is a complete minimal Lagrangian surface in 
${\C}^{2}$ whose Gauss map is surjective. 
See \cite{Ka2014} for more details on other function-theoretic properties 
(e.g., ramification theorem, unicity theorem) of the Gauss map for this class. 

Next, we consider the case where $M$ is complete and has finite total curvature. 
From (\ref{eq:metric-curvature2}), the Gaussian curvature $K_{ds^{2}}$ is nonpositive, and 
the total curvature is given by  
\begin{equation}\label{eq:total-curvature}
\displaystyle \int_{M}K_{ds^{2}}dA=-\dfrac{1}{2}\int_{M} \biggl{(}\dfrac{2|g'_z|}{1+|g|^{2}} \biggr{)}^{2}du\wedge dv, 
\end{equation}
where $z=u+\mathbf{i}\, v$. 

\begin{proposition}\label{pro:algebra}
Let $M$ be a complete minimal Lagrangian surface with finite total curvature in ${\C}^{2}$. Then it satisfies 
\begin{enumerate}
\item[(a)] $M$ is conformally equivalent to $\overline{M}_{\gamma}\backslash \{p_{1}, \cdots , p_{k}\}$, where $\overline{M}_{\gamma}$ 
is a compact Riemann surface of genus $\gamma$, and $p_{1}, \cdots, p_{k}\in \overline{M}_{\gamma}$ $($\cite{Hu1957}$)$, 
\item[(b)] The Weierstrass data $(hdz, g)$ is extended meromorphically to $\overline{M}_{\gamma}$ $($\cite{Os1986}$)$. 
\end{enumerate}
\end{proposition}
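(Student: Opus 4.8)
The plan is to transport the classical Huber--Osserman structure theorem for complete minimal surfaces of finite total curvature in $\R^{3}$ to the present setting; the only difference is that the conformal factor of $ds^{2}$ carries $(1+|g|^{2})$ rather than $(1+|g|^{2})^{2}$, and this exponent plays no role in the arguments. For (a), note that by (\ref{eq:metric-curvature2}) the curvature $K_{ds^{2}}$ is nonpositive, so finite total curvature means $\int_{M}|K_{ds^{2}}|\,dA<\infty$. Huber's theorem \cite{Hu1957} then shows that $M$ is of finite topological type, hence homeomorphic to $\overline{M}_{\gamma}\setminus\{p_{1},\dots,p_{k}\}$ for a compact surface $\overline{M}_{\gamma}$ of genus $\gamma$; moreover each annular end has finite total curvature and is therefore parabolic, hence conformally a punctured disk, which promotes the homeomorphism to a conformal equivalence $M\cong\overline{M}_{\gamma}\setminus\{p_{1},\dots,p_{k}\}$.

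For (b), fix a puncture $p_{j}$ and a conformal coordinate $z$ with $D^{*}:=\{0<|z|<1\}$ a punctured neighborhood of $p_{j}$. I would first extend $g$. By (\ref{eq:total-curvature}) and finiteness of the total curvature, $\int_{D^{*}}|g'_{z}|^{2}/(1+|g|^{2})^{2}\,du\wedge dv<\infty$; equivalently, the holomorphic map $g\colon D^{*}\to\C P^{1}$ has finite area, counted with multiplicity, for the Fubini--Study metric. If $z=0$ were an essential singularity of $g$, then the Casorati--Weierstrass theorem --- indeed the big Picard theorem --- would force $g$ to assume almost every value of $\C P^{1}$ infinitely often near $0$, hence to have infinite area there; contradiction. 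So $g$ extends meromorphically across $z=0$, and thus over all of $\overline{M}_{\gamma}$.

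Next I would extend $\omega=hdz$. From (\ref{eq:Weierstrass2}) the $(1,0)$-part of $df$ equals $\tfrac{1}{\sqrt{2}}e^{\mathbf{i}\beta/2}(g\omega,\,\omega)$, so $\omega$ and $g\omega$ are globally defined holomorphic $1$-forms on $M$; write $\omega=\varphi(z)\,dz$ on $D^{*}$ with $\varphi$ holomorphic. Since $g$ is now meromorphic at $0$, there are an integer $m\ge 0$ and constants $0<c\le C$ with $c|z|^{-2m}\le 1+|g(z)|^{2}\le C|z|^{-2m}$ near $0$; hence $ds^{2}=(1+|g|^{2})|\varphi|^{2}|dz|^{2}$ is comparable near $0$ to $|\psi(z)\,dz|^{2}$, where $\psi(z):=z^{-m}\varphi(z)$ is holomorphic on $D^{*}$. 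Completeness of $ds^{2}$ at $p_{j}$ means every divergent path toward $z=0$ has infinite length, i.e. $\int_{\gamma}|\psi|\,|dz|=\infty$ for such paths; by Osserman's lemma \cite{Os1986} this cannot happen unless $\psi$ --- and therefore $\varphi=z^{m}\psi$ --- has at most a pole at $z=0$. Hence $\omega$ extends meromorphically across each $p_{j}$, which finishes (b).

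The main obstacle is this last step: extending $\omega$ requires genuinely using completeness to exclude an essential singularity of $\varphi$, whereas the extension of $g$ is only a soft finite-area argument and part (a) is a direct appeal to Huber's theorem. The idea behind Osserman's lemma is that near an essential singularity of a holomorphic $\psi$ one can find arcs approaching the puncture along which $|\psi|$ stays bounded, producing a divergent path of finite $|\psi\,dz|$-length and so violating completeness.
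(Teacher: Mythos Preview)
The paper does not give its own proof of this proposition; it simply states the result with references to Huber \cite{Hu1957} for (a) and Osserman \cite{Os1986} for (b). Your sketch correctly reproduces the classical Huber--Osserman arguments behind those citations: Huber's theorem for the conformal type, the finite-spherical-area/big Picard argument to extend $g$, and Osserman's lemma (indeed invoked later in the paper as \cite[Lemma 9.6]{Os1986} and \cite{Ma1963}) together with completeness to rule out an essential singularity of $h$. Nothing is missing, and the exponent change from $(1+|g|^{2})^{2}$ to $(1+|g|^{2})$ is, as you note, irrelevant to every step.
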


We call the points $\{p_{1}, \cdots, p_{k}\}$ {\it ends} of $M$. Thus we easily show that the total curvature of a complete minimal 
Lagrangian surface in ${\C}^{2}$ can only take the values $-2\pi m$, $m$ a non-negative integer, or $-\infty$. 

We give the upper bound for the number of exceptional values of the Gauss map of a complete minimal Lagrangian surface 
with finite total curvature in ${\C}^{2}$. 

\begin{theorem}\label{thm:KKM}
Let $M=\overline{M}_{\gamma}\backslash \{p_{1}, \cdots, p_{k}\}$ be a complete minimal Lagrangian surface with finite 
total curvature in ${\C}^{2}$ 
and $g\colon M\to \C\cup \{\infty \}$ the Gauss map. 
Let $d$ be the degree of $g$ considered as a map of $\overline{M}_{\gamma}$. 
Assume that $M$ is not a Lagrangian plane. 
Then we have 
\begin{equation}\label{inequ:KKM1}
D_{g}:= \sharp (({\C} \cup \{\infty \}) \backslash g(M))\leq 2+\dfrac{2}{R}, \quad \dfrac{1}{R}=\dfrac{\gamma -1+k/2}{d}<\dfrac{1}{2}. 
\end{equation}
In particular, the Gauss map can omit at most two values. 
\end{theorem}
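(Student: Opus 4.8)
The plan is to adapt Osserman's classical argument for complete minimal surfaces of finite total curvature in $\R^3$ to the present Lagrangian setting, combining Theorem~\ref{thm:Weierstrass} with Proposition~\ref{pro:algebra}. By Proposition~\ref{pro:algebra} we may regard $g$ as a holomorphic map $g\colon\overline{M}_\gamma\to\C\cup\{\infty\}$ of degree $d$ and $\omega:=hdz$ as a meromorphic $1$-form on $\overline{M}_\gamma$. Since $M$ is not a Lagrangian plane, $g$ is nonconstant, hence $d\ge1$ and $g$ is onto. The first point is elementary: if $a\in\C\cup\{\infty\}$ is omitted by $g$ on $M$, then every point of $g^{-1}(a)\subset\overline{M}_\gamma$ is one of the ends $p_1,\dots,p_k$, and distinct omitted values give disjoint subsets of $\{p_1,\dots,p_k\}$ (each $p_j$ has the single image $g(p_j)$).

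Next I would run the Riemann--Hurwitz count for $g$. Its total ramification is $2d+2\gamma-2$, and the ramification over a value $a$ is $d-\sharp g^{-1}(a)$. Writing $a_1,\dots,a_{D_g}$ for the omitted values and $\nu_i:=\sharp g^{-1}(a_i)$, the observation above gives $\sum_i\nu_i\le k$, while $\sum_i(d-\nu_i)$ does not exceed the total ramification: $D_gd-k\le\sum_i(d-\nu_i)\le 2d+2\gamma-2$. Hence $D_gd\le 2d+2\gamma-2+k$, i.e. $D_g\le 2+(2\gamma-2+k)/d=2+2/R$, which is the first inequality in (\ref{inequ:KKM1}).

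The heart of the matter is the strict inequality $1/R<1/2$, that is $d>2\gamma-2+k$; here completeness is essential. From (\ref{eq:metric3}), $ds^2=(1+|g|^2)|\omega|^2$ is a genuine metric on $M$, so $\ord_P\omega=\max(-\ord_Pg,0)$ for $P\in M$, while in a coordinate centered at an end $p_j$ completeness of $ds^2$ forces $\ord_{p_j}\omega+\min(\ord_{p_j}g,0)\le-1$ (as one sees from the local growth of $ds^2$ there). Summing $\ord_P\omega$ over $\overline{M}_\gamma$, which equals $2\gamma-2$, and using $d=\sum_{P\in\overline{M}_\gamma}\max(-\ord_Pg,0)$, one gets $2\gamma-2\le d-k$, with equality only if $\ord_{p_j}\omega+\min(\ord_{p_j}g,0)=-1$ at every end. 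To exclude this I would use that $\int\omega$ and $\int g\omega$ in Theorem~\ref{thm:Weierstrass} are single-valued on $M$ (indeed $\int\omega$ is recovered, up to an additive constant, from the components of $f$ and their conjugates), so $\omega$ has vanishing residue at each $p_j$ and vanishing period along every cycle of $M$. Now an end with $g(p_j)\ne\infty$ together with the equality above would force $\ord_{p_j}\omega=-1$, a simple pole of $\omega$, hence a nonzero residue, which is impossible; so $g$ would have a pole at every end, and then $\ord_P\omega\ge0$ everywhere makes $\omega$ a holomorphic $1$-form on $\overline{M}_\gamma$ with all periods zero, whence $\omega\equiv0$, contradicting nondegeneracy of $ds^2$. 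Therefore $d>2\gamma-2+k$, so $2/R<1$ and, $D_g$ being an integer, $D_g\le 2+2/R<3$ yields $D_g\le2$.

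The step I expect to be the main obstacle is the last one: the Riemann--Hurwitz bookkeeping is routine, but ruling out the borderline case $d=2\gamma-2+k$ requires a delicate comparison of the vanishing orders of $\omega$ and of $g$ at each end, fed by the single-valuedness constraints of the Weierstrass representation. An alternative for this step is to note that the total curvature equals $-2\pi d$ by (\ref{eq:total-curvature}), so the Cohn--Vossen inequality $\int_MK_{ds^2}\,dA\le2\pi\chi(M)=2\pi(2-2\gamma-k)$ already gives $d\ge2\gamma-2+k$, and that equality in Cohn--Vossen — which would make every end asymptotically flat of full angle $2\pi$ — cannot occur for a non-planar complete minimal Lagrangian surface; I would keep the residue/period argument as the primary one, as it is self-contained given the results quoted above.
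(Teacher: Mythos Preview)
Your proof is correct; the Riemann--Hurwitz count giving $D_g\le 2+2/R$ is essentially identical to the paper's, and your period/residue argument for the strict inequality $1/R<1/2$ is sound (indeed $\omega=dF_2$ is exact on $M$, so all periods vanish, and your dichotomy at the ends works).

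The paper handles the strict inequality somewhat differently. It first normalizes by a rotation of the surface so that $g$ has neither a zero nor a pole at any end $p_j$ (and has only simple zeros and poles on $M$). Under this normalization the zeros of $\omega$ on $M$ are exactly the $d$ simple poles of $g$, and completeness forces $\omega$ to have a pole of some order $\mu_j\ge 1$ at each end. The period condition (exactly your observation that both $\omega$ and $g\omega$ have zero residues) then excludes $\mu_j=1$, so $\mu_j\ge 2$; applying the degree formula for the canonical divisor gives $d=2\gamma-2+\sum_j\mu_j\ge 2\gamma-2+2k$, which is the Chern--Osserman inequality, strictly stronger than the $d>2\gamma-2+k$ that you obtain. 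Your route avoids the normalization and the explicit $\mu_j\ge 2$ step, trading this for the global argument that a holomorphic $1$-form with vanishing periods on $\overline{M}_\gamma$ must be zero; the paper's route is a bit shorter and delivers the sharper bound (which it uses elsewhere, e.g.\ in Corollary~\ref{cor:2pi}). In the case $g(p_j)=\infty$ you could also have observed directly that $g\omega$ acquires a simple pole at $p_j$, giving an immediate contradiction parallel to the paper's argument and avoiding the global vanishing step.
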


The number two in Theorem \ref{thm:KKM} is optimal. A famous example of a complete minimal Lagrangian surface 
with finite total curvature in ${\C}^{2}$ whose Gauss map omits exactly two values is the Lagrangian catenoid (\cite{CU1999}). 
Indeed, the surface is defined as 
$$
F=(F_{1}, F_{2})\colon \C\backslash \{0\} \to {\C}^{2}, \quad (F_{1}, F_{2})=\biggl{(}z,\, \dfrac{1}{z} \biggr{)}. 
$$
The Weierstrass data is given by $(hdz, g)=(-dz/z^{2}, -z^{2})$ and the Gauss map omits exactly two values, $0$ and $\infty$. 
Note that Castro and Urbano in \cite{CU1999} proved that the only Lagrangian catenoid is a minimal Lagrangian surface 
with circle symmetry in ${\C}^{2}$. 

\begin{remark}\label{rmk:KKM}
In the case of a complete minimal surface with finite total curvature in ${\R}^{3}$, 
Osserman \cite{Os1964} proved that the Gauss map can omit at most three values. 
However, at present, there exists no example whose Gauss map omits precise three values 
(see \cite{Fa1993}, \cite{KKM2008} and \cite{WX1987} for details). 
\end{remark}

Finally, by refining the Mo-Osserman argument in \cite{MO1990}, we get the following: 

\begin{theorem}\label{great-Picard1}
Let $M$ be a complete minimal Lagrangian surface in ${\C}^{2}$. If the Gauss map takes on four values only 
a finite number of times, then $M$ has finite total curvature. 
\end{theorem}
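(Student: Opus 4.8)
The plan is to adapt the Mo--Osserman argument \cite{MO1990} to the Lagrangian metric $ds^2=|h|^2(1+|g|^2)|dz|^2$, the key difference from the Euclidean $\R^3$ case being the exponent $1$ (rather than $2$) on the factor $1+|g|^2$. First I would dispose of the trivial case: if $g$ is constant, then by \eqref{eq:metric-curvature2} the curvature vanishes identically, and a complete flat minimal Lagrangian surface is a Lagrangian plane, which has finite (zero) total curvature. So assume $g$ is nonconstant. By hypothesis there are four distinct values $a_1,a_2,a_3,a_4\in\C\cup\{\infty\}$ such that $g^{-1}(\{a_1,a_2,a_3,a_4\})$ is a finite set; after a conformal change of coordinate on $\C P^1$ and removing from $M$ the finitely many points lying over these values (which does not affect finiteness of total curvature), I may assume $g$ omits $a_1,a_2,a_3,a_4$ on $M\setminus\{\text{finite set}\}$.

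The heart of the argument is to produce, on a neighborhood of each divergent path / on the universal-type cover, a flat metric comparable to $ds^2$ with respect to which a Schwarz--Pick / Ahlfors--Schwarz estimate applies. Concretely, with $q=4$ omitted values and the exponent $m=1$, one forms a metric of the shape
\begin{equation*}
d\tau^2 = |h|^{2/(1-\mu)}\,\frac{\prod_{j=1}^{4}|g-a_j|^{2\mu_j}}{(1+|g|^2)^{?}}\,|dz|^2,
\end{equation*}
choosing the weights $\mu_j\in(0,1)$ (and the auxiliary exponent so the $a_j=\infty$ case is handled symmetrically) so that $d\tau^2$ is flat away from the zeros of $h$ and of $g'$, has the right curvature sign, and dominates $ds^2$. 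The arithmetic condition that makes this possible is precisely $q=4>m+2=3$: one needs $\sum_j(1-\mu_j)<q-2$ with each $\mu_j$ close enough to $1$, which is exactly where the number four enters. Following Mo--Osserman, completeness of $ds^2$ forces $d\tau^2$ to be complete too, but a complete flat metric of that special form on a punctured-disk neighborhood of an end is impossible unless the end is of finite type; this forces each end to be conformally a punctured disk and $h$, $g$ to extend meromorphically there, hence (via Proposition \ref{pro:algebra}-type reasoning) $M$ has the conformal type of a compact surface minus finitely many points with meromorphic Weierstrass data, and by \eqref{eq:total-curvature} the total curvature $-\frac12\int(2|g'_z|/(1+|g|^2))^2\,du\wedge dv$ is finite because it is computed from a meromorphic $g$ on a compact surface.

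The main obstacle I expect is \emph{the exponent bookkeeping}: verifying that with the factor $(1+|g|^2)^{1}$ from the Lagrangian metric (as opposed to $(1+|g|^2)^2$ in Osserman's setting) the comparison metric $d\tau^2$ can still be made simultaneously (i) flat, (ii) $\geq C\,ds^2$ near the ends, and (iii) complete, using only $q=4$ omitted values rather than $q=5$. In Osserman's $\R^3$ argument the threshold is $q>4$; the content of Theorem~\ref{great-Picard1} is that the smaller exponent lowers this threshold by one, and the delicate point is checking that the weights $\mu_j$ can be chosen in the open interval forced by $\sum_j(1-\mu_j)<q-2=2$ while still keeping $d\tau^2$ complete (the completeness needs the $\mu_j$ bounded away from $1$, the flat-domination needs the complementary inequality, and these two requirements are compatible exactly when $q\geq 4$). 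Once the weights are pinned down, the remaining steps — the Ahlfors--Schwarz lemma on each end, the classification of the resulting flat complete ends, and the passage to meromorphic data on the compactification — are routine and parallel to \cite{MO1990} and \cite{Os1964}.
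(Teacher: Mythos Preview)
Your overall strategy is the paper's: build an auxiliary flat metric from the Weierstrass data, show it is complete using the completeness of $ds^2$, then appeal to Huber's theorem and the Great Picard theorem to pin down the conformal type and extend $g$ meromorphically. You have also correctly located the reason the threshold drops from five omitted values (in $\R^3$) to four: the exponent $m=1$ on $(1+|g|^2)$ in the Lagrangian metric.

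Where your sketch goes wrong is the step ``completeness of $ds^2$ forces $d\tau^2$ to be complete too,'' which you attribute to a pointwise comparison $d\tau^2\geq C\,ds^2$. No such inequality holds for the auxiliary metric actually used, and this is not how the argument closes. In the paper one takes (after normalizing $\alpha_4=\infty$)
\[
d\sigma^2=|h|^{\frac{2}{1-\lambda}}\Bigl(\tfrac{1}{|g'_z|}\textstyle\prod_{j=1}^{3}\bigl(|g-\alpha_j|/\sqrt{1+|\alpha_j|^2}\bigr)^{1-\eta}\Bigr)^{\frac{2\lambda}{1-\lambda}}|dz|^2,\qquad \lambda=\tfrac{1}{2-4\eta},\ 0<\eta<\tfrac14,
\]
so that $\tfrac12<\lambda<1$. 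One then argues by contradiction: if $d\sigma^2$ were incomplete, a finite-$d\sigma$-length divergent curve exists; the flat developing map (Lemma~\ref{Lemma2-2}) realizes a neighborhood of this curve isometrically as a Euclidean disk $\Delta_R$; and Fujimoto's derivative estimate (Lemma~\ref{Lemma2-1}) applied to $g\circ\Phi$ with $q=4$ gives
\[
\Phi^*ds=|h|\sqrt{1+|g|^2}\,|dz|\ \leq\ (C')^{\lambda}\Bigl(\tfrac{R}{R^2-|z|^2}\Bigr)^{\lambda}|dz|,
\]
which is integrable along a radius precisely because $\lambda<1$. This manufactures a divergent curve of finite $ds$-length in $M$, contradicting completeness. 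The Fujimoto inequality (a Nevanlinna-type bound), together with the developing map, is the analytic core; it replaces the pointwise domination you invoke and is exactly where the ``exponent bookkeeping'' you flag is carried out. The condition $\lambda>\tfrac12$ is also needed, separately, to ensure that zeros of $g'_z$ lie at infinite $d\sigma$-distance, so the hypothetical short curve really escapes to the boundary of $M$.

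Finally, your phrase ``a complete flat metric of that special form on a punctured-disk neighborhood of an end is impossible unless the end is of finite type'' inverts the logic. Once $d\sigma^2$ is complete and flat outside a compact set, it has finite total curvature; Huber's theorem then gives finite connectivity of $M''$ (hence of $M$, since $g'_z$ has only finitely many zeros), each annular end is conformally a punctured disk by \cite{Os1963}, and Great Picard extends $g$ across each puncture. There is no impossibility step at the ends.
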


On the other hand, by Theorem \ref{thm:KKM}, a complete minimal Lagrangian surface with finite total curvature 
whose Gauss map omits more than two values must be a Lagrangian plane. One consequence of Theorem \ref{great-Picard1} 
is therefore: 

\begin{corollary}\label{great-Picard2}
Let $M$ be a complete minimal Lagrangian surface in ${\C}^{2}$ which is not a Lagrangian plane. 
If the Gauss map of $M$ omits three values, then it takes all other values infinitely many times. 
\end{corollary}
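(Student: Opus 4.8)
The plan is to deduce the corollary directly from Theorems \ref{thm:KKM} and \ref{great-Picard1} by a contradiction argument, so the only real work is a careful bookkeeping of how many times each value is attained. Let $M$ be a complete minimal Lagrangian surface in $\mathbf{C}^2$ that is not a Lagrangian plane, and suppose its Gauss map $g$ omits three distinct values $\alpha_1,\alpha_2,\alpha_3\in\mathbf{C}\cup\{\infty\}$. Assume, toward a contradiction, that $g$ does \emph{not} take all other values infinitely many times; then there is a fourth value $\alpha_4\notin\{\alpha_1,\alpha_2,\alpha_3\}$ with $g^{-1}(\alpha_4)$ finite (possibly empty).

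The key observation is that ``omitting a value'' is the extreme case of ``attaining a value only finitely many times.'' Hence the four distinct values $\alpha_1,\alpha_2,\alpha_3,\alpha_4$ are each taken by $g$ only a finite number of times: the first three not at all, the fourth by hypothesis. This is precisely the hypothesis of Theorem \ref{great-Picard1}, so we conclude that $M$ has finite total curvature.

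Now I would invoke Theorem \ref{thm:KKM}: since $M$ is complete, has finite total curvature, and is not a Lagrangian plane, the number $D_g$ of values omitted by the Gauss map satisfies $D_g\le 2$. But by assumption $g$ omits $\alpha_1,\alpha_2,\alpha_3$, so $D_g\ge 3$, a contradiction. Therefore no such $\alpha_4$ exists, i.e., $g$ takes every value in $(\mathbf{C}\cup\{\infty\})\setminus\{\alpha_1,\alpha_2,\alpha_3\}$ infinitely many times, which sharpens Corollary \ref{cor:exceptional1}.

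There is no genuine obstacle here, since all the analytic content is already packaged in Theorems \ref{thm:KKM} and \ref{great-Picard1}; the only point deserving a line of care is the reduction noted above, namely that three omitted values plus one value attained finitely often together supply the required ``four values taken finitely many times'' in the statement of Theorem \ref{great-Picard1}.
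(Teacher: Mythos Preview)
Your argument is correct and matches the paper's own reasoning: the paper derives the corollary by noting that three omitted values together with one further value attained only finitely often feed into Theorem~\ref{great-Picard1} to give finite total curvature, whereupon Theorem~\ref{thm:KKM} forces $D_g\le 2$, a contradiction. There is nothing to add.
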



\section{Proof of Main theorems}\label{section3}

\subsection{Proof of Theorem \ref{thm:KKM}}\label{section3-1}
Since the total curvature is finite, we may assume that $\overline{M}_{\gamma}\backslash \{p_{1}, 
\cdots , p_{k}\}$, where $\overline{M}_{\gamma}$ is a compact Riemann surface of genus 
$\gamma$, and $p_{1}, \cdots, p_{k}\in \overline{M}_{\gamma}$. 
By a rotation of the surface, we may assume that the Gauss map $g$ has neither zero 
nor pole at $p_{j}$ and that the zeros and the poles of $g$ are simple. 
The simple poles of $g$ coincides with the simple zeros of $hdz$ because the metric $ds^{2}$ is nondegenerate. 
By the completeness of $M$, $hdz$ has poles at each end $p_{j}$ (\cite{Ma1963}, \cite[Lemma 9.6]{Os1986}). 
Moreover, since the surface is well-defined on $M$, $hdz$ has poles of order ${\mu}_{j}\geq 2$ at $p_{j}$. 
Indeed, suppose that ${\mu}_{j}=1$. Then we can expand the function $h(z)$ about each end $p_{j}$ as 
$$
h(z)=\dfrac{c^{\, j}_{-1}}{z-p_{j}}+\displaystyle \sum_{n=0}^{\infty}c^{\, j}_{n}z^{n}, \quad c^{\, j}_{1}\not =0. 
$$
We also are able to expand the function $g(z)h(z)$ about each end $p_{j}$ as 
$$
g(z)h(z)=\dfrac{d^{\, j}_{-1}}{z-p_{j}}+\displaystyle \sum_{n=0}^{\infty}d^{\, j}_{n}z^{n}
$$
because $g$ is holomorphic around $p_{j}$. By (\ref{eq:Weierstrass2}), if the surface is well-defined on $M$, 
then we have 
$$
\displaystyle \int_{\gamma}ghdz - \mathbf{i}\int_{\gamma}\overline{hdz}=0, \quad 
\int_{\gamma}hdz +\mathbf{i}\int_{\gamma} \overline{ghdz}=0 
$$
for any loop $\gamma\in H_{1}(M, \Z)$. Thus we get that 
$$
2\pi\mathbf{i}(d^{\, j}_{-1}+\mathbf{i}\, \overline{c^{\, j}_{-1}})=0, \quad 
2\pi\mathbf{i}(c^{\, j}_{-1}-\mathbf{i}\overline{\, d^{j}_{-1}})=0, 
$$
and therefore $c^{\, j}_{-1}=d^{\, j}_{-1}=0$. Hence ${\mu}_{j}=0$, contrary to our assumption. 
Applying the Riemann-Roch theorem to $hdz$ 
on $\overline{M}_{\gamma}$, we obtain that 
$$
d- \displaystyle \sum_{j=1}^{k}{\mu}_{j}= 2\gamma -2, 
$$
where $d$ denotes the degree of $g$ considered as a map of $\overline{M}_{\gamma}$. Thus we have 
\begin{equation}\label{eq:proof-Osserman}
d=2\gamma -2+ \displaystyle \sum_{j=1}^{k}{\mu}_{j}\geq 2\gamma -2 +2k > 2\gamma -2+k= 2(\gamma -1+ k/2), 
\end{equation}
and 
$$
\dfrac{1}{R}<\dfrac{1}{2}. 
$$

On the other hand, we assume that $g$ omits $D_{g}$ values. 
Let $n_{0}$ be the sum of the branching orders 
at the image of these exceptional values. Then we have 
$$
k \geq d D_{g} - n_{0}. 
$$
Let $n_{g}$ be the total branching order of $g$ on $\overline{M}_{\gamma}$. Then applying the 
Riemann-Hurwitz formula to the meromorphic function $g$ on $\overline{M}_{\gamma}$, we have 
\begin{equation}\label{eq:R-H}
n_{g}=2(d+\gamma -1). 
\end{equation}
Hence we have 
\begin{equation}\label{eq:omitted-R}
D_{g}\leq \dfrac{n_{0}+k}{d}\leq \dfrac{n_{g}+k}{d}=2+\dfrac{2}{R}. 
\end{equation}
\qed

\begin{remark}\label{rem-Osserman}
Since
$$
\displaystyle \int_{M} K_{ds^{2}}dA=-2\pi d, 
$$
the inequality in (\ref{eq:omitted-R})
\begin{equation}\label{eq:CO}
d \geq 2\gamma -2 +2k
\end{equation}
corresponds to the Chern-Osserman inequality (\cite{CO1967}, see also \cite{Um2010}) 
for complete minimal surfaces in ${\R}^{4}(={\C}^{2})$. 
Note that Kokubu, Umehara and Yamada \cite{KUY2002} proved that the equality of (\ref{eq:CO}) holds if and only if 
each end is asymptotic to a catenoid-type end or a planar end in some 3-dimensional subspace in ${\R}^{4}(={\C}^{2})$. 
In particular, all ends are embedded. Inequality (\ref{eq:CO}) is useful in showing the global properties of complete 
minimal Lagrangian surfaces with finite total curvature in ${\C}^{2}$. For instance, we can show that the following example 
is the unique complete minimal Lagrangian surface of the finite total curvature $-2\pi$ in ${\C}^{2}$. 

\begin{corollary}\label{cor:2pi}
A complete minimal Lagrangian surface in ${\C}^{2}$ whose total curvature is $-2\pi$ must have the following data: 
\begin{equation}\label{eq:Enneper}
F=(F_{1}, F_{2}) \colon \C\to {\C}^{2}, \quad (F_{1}, F_{2})=(az^{2}+b,\, 2az+c) \quad (a, b, c\in \C).
\end{equation}
\end{corollary}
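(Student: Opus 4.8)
The plan is to show that the hypothesis forces the degree of the Gauss map to be $1$ and the underlying surface to be $\C$, and then to normalize the Weierstrass data to the form (\ref{eq:Enneper}).

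First I would pin down the degree. Since the total curvature equals $-2\pi$ and, by Remark \ref{rem-Osserman}, $\int_{M}K_{ds^{2}}\,dA=-2\pi d$ with $d=\deg g$, we get $d=1$. In particular $g$ is nonconstant, so $M$ is not a Lagrangian plane and the Chern--Osserman inequality (\ref{eq:CO}) applies: $1=d\geq 2\gamma-2+2k$. A complete minimal surface of finite total curvature in $\C^{2}$ is noncompact, hence $k\geq 1$; together with $\gamma\geq 0$ this forces $\gamma=0$ and $k=1$. Thus $\overline{M}_{0}=\C P^{1}$ and $M\cong\C P^{1}\setminus\{p_{1}\}$, a once-punctured sphere.

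Next I would normalize $g$. Because $\deg g=1$, the meromorphic extension of $g$ to $\overline{M}_{0}=\C P^{1}$ is a biholomorphism. Applying a rotation of the surface in $\C^{2}$ — which acts on the Gauss sphere $S^{2}$ as a rotation, hence transitively — I may assume $g(p_{1})=\infty$. Then $g$ maps $M=\C P^{1}\setminus\{p_{1}\}$ biholomorphically onto $\C P^{1}\setminus\{\infty\}=\C$, so $w:=g$ may be used as a global conformal coordinate on $M$; in this coordinate the Gauss map is the identity $g(w)=w$ and the single end lies at $w=\infty$. Writing the Weierstrass $1$-form as $hdz=\tilde h(w)\,dw$, Proposition \ref{pro:algebra}(b) together with this holomorphic change of coordinate shows that $\tilde h$ extends to a rational function on $\C P^{1}$. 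The induced metric is $ds^{2}=(1+|w|^{2})\,|\tilde h(w)|^{2}\,|dw|^{2}$, and since this is an everywhere nondegenerate, nonsingular Riemannian metric on $M\cong\C$, the function $\tilde h$ must be holomorphic and nowhere zero on $\C$; a rational function with this property is a nonzero constant, say $\tilde h\equiv 2a$ with $a\neq 0$.

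Finally, with $S_{1}=\tilde h=2a$ and $S_{2}=-g\,S_{1}=-2aw$, the Weierstrass representation (\ref{eq:Weierstrass2}) — equivalently $F_{2}=\int S_{1}\,dw$, $F_{1}=-\int S_{2}\,dw$ from Theorem \ref{thm:Weierstrass} — yields $F_{2}=2aw+c$ and $F_{1}=aw^{2}+b$ with $b,c\in\C$ the integration constants, which are absorbed by a translation of $\C^{2}$. Renaming $w$ as $z$ gives precisely the data (\ref{eq:Enneper}). I expect the only step needing care to be the normalization: one must invoke that the available congruences of $\C^{2}$ realize the full rotation group of the Gauss sphere — the fact already used silently in the proof of Theorem \ref{thm:KKM} to move the zeros and poles of $g$ off the ends — so that $g(p_{1})$ may be sent to $\infty$ and the Gauss map itself may serve as a coordinate; the remaining ingredients (the degree count, the use of (\ref{eq:CO}), and the rationality-plus-nondegeneracy argument forcing $\tilde h$ to be constant) are routine.
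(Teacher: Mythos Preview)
Your proof is correct and follows essentially the same route as the paper's: from $d=1$ and the Chern--Osserman inequality (\ref{eq:CO}) you deduce $\gamma=0$, $k=1$, then normalize so that $g(z)=z$ on $M\cong\C$, argue that the rational function $h$ is zero-free and holomorphic on $\C$ and hence constant, and integrate. The only cosmetic difference is that you spell out the normalization via a rotation of the ambient $\C^{2}$, whereas the paper simply invokes ``a suitable M\"obius transformation''; otherwise the arguments coincide.
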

\begin{proof}
Since $d=1$, $g\colon M\to S^{2}(=\C\cup \{\infty \})$ is a conformal diffeomorphism, and $M$ is conformally equivalent 
to $S^{2}\setminus \{p_{1}, \cdots , p_{k}\}$. However, by (\ref{eq:CO}), 
$2(k-1)\leq 1$. Thus $k=1$. If $k=1$, then $M$ is conformally equivalent to the complex plane $\C$. In fact, 
we may identify $M$ with $g(M)\subset \C\cup \{\infty \}$ and assume $(\C\cup \{\infty \}) \backslash g(M)=\{\infty \}$ 
after a suitable M\"obius transformation. Then $g(z)=z$ and the holomorphic $1$-form $hdz$ has no zeros on $\C$ because 
the metric $ds^{2}$ is nondegenerate. Since $hdz$ is extended meromorphically to $S^{2}=\C\cup \{\infty\}$, $h(z)$ is 
a polynomial in $z$ and therefore constant. Set that $h=2a\,(a\in \C)$. Then we have 
$$
F_{1}=\int gh\,dz=az^{2}+b, \qquad F_{2}=\int hdz=2az+c. 
$$
\end{proof}
\end{remark}

\subsection{Proof of Theorem \ref{great-Picard1}}\label{section3-2}
We first recall the notion of chordal distance between two distinct values in the Riemann sphere $\C\cup \{\infty \}$. 
For two distinct values $\alpha$, $\beta\in \C\cup \{\infty\}$, we set 
$$
|\alpha, \beta|:= \dfrac{|\alpha -\beta|}{\sqrt{1+|\alpha|^{2}}\sqrt{1+|\beta|^{2}}}
$$
if $\alpha \not= \infty$ and $\beta \not= \infty$, and $|\alpha, \infty|=|\infty, \alpha| := 1/\sqrt{1+|\alpha|^{2}}$. 
We note that, if we take $v_{1}$, $v_{2}\in {\Si}^{2}$ with $\alpha =\varpi (v_{1})$ and $\beta = \varpi (v_{2})$, we have that 
$|\alpha, \beta|$ is a half of the chordal distance between $v_{1}$ and $v_{2}$, where $\varpi$ denotes the stereographic projection of 
the $2$-sphere ${\Si}^{2}$ onto $\C\cup \{\infty \}$. 

Before proceeding to the proof of Theorem \ref{great-Picard1}, we recall two function-theoretic lemmas. 
\begin{lemma}{\cite[(8.12) in page 136]{Fu1997}}\label{Lemma2-1}
Let $g$ be a nonconstant meromorphic function on ${\Delta}_{R}=\{z\in \C ; |z|< R \}$ $(0<R\leq +\infty)$ which omits $q$ values 
${\alpha}_{1}, \cdots, {\alpha}_{q}$. If $q>2$, then for each positive $\eta$ with $\eta <(q-2)/q$, then there exists a positive constant $C'$, 
depending on $q$ and $L:=\min_{i< j}|{\alpha}_{i}, {\alpha}_{j}|$, such that 
\begin{equation}\label{Lemma2-1-1}
\dfrac{|g'_{z}|}{(1+|g|^{2})\prod_{j=1}^{q}|g, {\alpha}_{j}|^{1-\eta}}\leq C'\dfrac{R}{R^{2}-|z|^{2}}. 
\end{equation}  
\end{lemma}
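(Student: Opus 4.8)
\textbf{Proof plan for Lemma \ref{Lemma2-1}.}
The statement is a pointwise gradient estimate for a nonconstant meromorphic function $g$ on $\Delta_R$ omitting $q>2$ values, with the characteristic $R/(R^2-|z|^2)$ factor on the right-hand side. The plan is to reduce the estimate to the classical Schwarz--Pick lemma by constructing, out of the omitted-value data, a conformal metric of strictly negative curvature bounded away from zero on the thrice-punctured sphere, then pulling this metric back by $g$ and comparing with the Poincar\'e metric of the disc. First I would introduce the pseudometric
\begin{equation*}
d\sigma^2 = \frac{|dw|^2}{(1+|w|^2)^2\prod_{j=1}^q |w,\alpha_j|^{2(1-\eta)}}
\end{equation*}
on $\C\cup\{\infty\}$ with the points $\alpha_1,\dots,\alpha_q$ removed; here $|w,\alpha_j|$ is the chordal distance introduced just before the lemma. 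The left-hand side of (\ref{Lemma2-1-1}) is then precisely the conformal factor of the pullback $g^*(d\sigma)$ written in the flat coordinate $z$, so the desired inequality is exactly the assertion that $g^*(d\sigma)$ is dominated by a constant multiple of the Poincar\'e metric $\frac{2R}{R^2-|z|^2}|dz|$ of $\Delta_R$.

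The next step is the curvature computation for $d\sigma^2$. Away from the punctures the Gaussian curvature of $d\sigma^2$ is computed from $-\Delta \log(\text{density})$ divided by the density. The factor $(1+|w|^2)^{-2}$ alone gives the round metric of curvature $+1$; each factor $|w,\alpha_j|^{-2(1-\eta)}$ contributes, since $\log|w,\alpha_j|$ is harmonic away from $w=\alpha_j$, only the curvature coming from $-\Delta\log(1+|w|^2)$-type terms, so that the total curvature is $K = -\bigl(q(1-\eta)-1\bigr) + (\text{bounded perturbation})$ when $\eta<(q-2)/q$, i.e. it is bounded above by a strictly negative constant $-B<0$ on a neighborhood of the punctures, while near each $\alpha_j$ the exponent $2(1-\eta)<2$ makes the density blow up in an integrable way so the metric is complete and of curvature $\le -B$ there as well. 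The key quantitative point to verify is that $q(1-\eta)>1$ is guaranteed exactly by the hypothesis $\eta<(q-2)/q$ together with $q>2$, which pins down why this threshold is the correct one.

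With a metric of curvature bounded above by $-B<0$ in hand, the Ahlfors--Schwarz lemma (the generalized Schwarz--Pick lemma for metrics of negative curvature) applies to the holomorphic map $g\colon\Delta_R\to(\C\cup\{\infty\})\setminus\{\alpha_1,\dots,\alpha_q\}$: the pullback $g^*(d\sigma^2)$, having curvature $\le -B$ wherever $g'_z\ne0$, is bounded by $\frac{1}{B}$ times the Poincar\'e metric of $\Delta_R$. Taking square roots and absorbing constants yields (\ref{Lemma2-1-1}) with $C'$ depending only on $q$, the lower bound $L=\min_{i<j}|\alpha_i,\alpha_j|$, and $\eta$. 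I expect the main obstacle to be the curvature estimate near the punctures: one must check carefully that the singular factors $|w,\alpha_j|^{-2(1-\eta)}$ do not spoil the uniform upper curvature bound and that the resulting metric is genuinely complete so that the Ahlfors--Schwarz comparison is legitimate; the mutual chordal separation $L$ of the omitted values enters precisely here, to ensure the perturbation terms coming from the several punctures stay controlled by a single constant. Since this is a cited result of Fujimoto, I would in the write-up simply invoke it, but the metric-and-curvature route above is the natural self-contained justification.
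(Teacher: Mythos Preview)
The paper does not prove this lemma at all: it is simply quoted from Fujimoto \cite{Fu1997} and used as a black box in the proof of Theorem~\ref{great-Picard1}. So there is no ``paper's own proof'' to compare against, and your final remark that you would just invoke the citation is exactly what the authors do.

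That said, your self-contained sketch has a genuine gap. The overall strategy---build a conformal metric on the punctured sphere, pull it back by $g$, and apply the Ahlfors--Schwarz lemma---is indeed the route Fujimoto takes, but the specific metric you write down does not have the properties you claim. With $\rho^{2}=(1+|w|^{2})^{-2}\prod_{j}|w,\alpha_{j}|^{-2(1-\eta)}$, a direct computation (using that $\log|w-\alpha_{j}|$ is harmonic) gives
\[
K_{d\sigma^{2}}\;=\;-\bigl(2q(1-\eta)-4\bigr)\,\prod_{j=1}^{q}|w,\alpha_{j}|^{2(1-\eta)},
\]
which is strictly negative when $q(1-\eta)>2$ (this, not $q(1-\eta)>1$, is what the hypothesis $\eta<(q-2)/q$ says) but tends to $0$ as $w\to\alpha_{j}$. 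So the curvature is \emph{not} bounded above by a negative constant. Your completeness claim is also backwards: near $\alpha_{j}$ the density behaves like $|w-\alpha_{j}|^{-(1-\eta)}$ with $1-\eta<1$, so the radial integral is finite and the metric is \emph{incomplete} at the punctures. With neither a uniform curvature bound nor completeness, the Ahlfors--Schwarz comparison does not go through as stated.

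Fujimoto's actual argument repairs exactly this point by inserting logarithmic weights: one works instead with a density of the shape
\[
\frac{1}{(1+|w|^{2})\prod_{j}|w,\alpha_{j}|\,\log\!\bigl(a/|w,\alpha_{j}|\bigr)}
\]
for a suitable constant $a>1$; the extra $\log$ factors make the curvature bounded above by a fixed $-B<0$, so Ahlfors--Schwarz applies and yields a bound with $\prod_{j}|g,\alpha_{j}|\log(a/|g,\alpha_{j}|)$ in the denominator. The role of the $\eta$-slack is then to absorb these logarithms at the very end, via the elementary inequality $t^{\eta}\log(a/t)\le C(\eta)$ for $0<t\le 1$, which converts $\prod_{j}|g,\alpha_{j}|\log(a/|g,\alpha_{j}|)$ into $\prod_{j}|g,\alpha_{j}|^{1-\eta}$ up to a constant depending on $\eta$, $q$, and $L$. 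Your sketch has the right architecture but is missing precisely this logarithmic correction, which is where the real work lies.
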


\begin{lemma}{\cite[Lemma 1.6.7]{Fu1993}}\label{Lemma2-2} 
Let $d{\sigma}^{2}$ be a conformal flat metric on an open Riemann surface $\Sigma$. 
Then, for each point $p\in \Sigma$, there exists a local diffeomorphism $\Phi$ of a 
disk ${\Delta}_{R}=\{z\in \C ; |z|< R \}$ $(0<R\leq +\infty)$ onto an open 
neighborhood of $p$ with $\Phi (0)=p$ such that $\Phi$ is a local isometry, that is, 
the pull-back ${\Phi}^{\ast}(d{\sigma}^{2})$ is equal to the standard Euclidean metric $ds^{2}_{Euc}$ on ${\Delta}_{R}$ 
and, for a point $a_{0}$ with $|a_{0}|=1$, the ${\Phi}$-image ${\Gamma}_{a_{0}}$ of the curve $L_{a_{0}}=\{w:= a_{0}s ; 0 < s < R\}$ 
is divergent in $\Sigma$. 
\end{lemma}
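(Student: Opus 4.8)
The plan is to realise $\Phi$ as the exponential map of the metric $d\sigma^2$ at $p$, restricted to a suitable disk, and then to extract the divergent radial curve from the maximality of that disk. Since $d\sigma^2$ is a genuine Riemannian metric, the exponential map $\exp_p$ is defined on an open, star-shaped domain $\mathcal{D}\subseteq T_p\Sigma$; after fixing a linear isometry $T_p\Sigma\cong\C$ (so that the inner product $d\sigma^2_p$ becomes $ds^{2}_{Euc}$) I regard $\exp_p$ as a map from $\mathcal{D}\subseteq\C$ into $\Sigma$, with $\exp_p(0)=p$.

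First I would verify that $\exp_p$ is a local isometry on all of $\mathcal{D}$, and this is exactly where flatness enters. The Jacobi equation along any geodesic reads $J''+K\,J=J''=0$ because the Gaussian curvature $K$ of $d\sigma^2$ vanishes identically; hence every Jacobi field is affine, a field with $J(0)=0$ equals $t\,J'(0)$ and never vanishes again, so $\exp_p$ has no conjugate points and is an immersion --- thus a local diffeomorphism --- throughout $\mathcal{D}$. By the Gauss lemma the pulled-back metric has the geodesic-polar form $dr^2+f(r,\theta)^2d\theta^2$, and flatness forces $f_{rr}=0$ with $f(0,\theta)=0$, $f_r(0,\theta)=1$, whence $f(r,\theta)=r$ and $\Phi^{\ast}(d\sigma^2)=dr^2+r^2d\theta^2=ds^{2}_{Euc}$. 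This is precisely the local-isometry assertion.

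Next I would pin down $R$ and $a_0$. For a unit vector $e^{\mathbf{i}\theta}$ set $R(\theta)=\sup\{s>0: se^{\mathbf{i}\theta}\in\mathcal{D}\}$, the length of the maximal geodesic from $p$ in that direction, and put $R:=\inf_\theta R(\theta)\in(0,+\infty]$. Because $\mathcal{D}$ is open, $\theta\mapsto R(\theta)$ is lower semicontinuous, so on the compact circle the infimum is attained at some $\theta_0$ whenever $R<\infty$. As $R\le R(\theta)$ for every $\theta$ and $\mathcal{D}$ is star-shaped about $0$, the disk $\Delta_R$ is contained in $\mathcal{D}$, and I take $\Phi:=\exp_p|_{\Delta_R}$. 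Being a local diffeomorphism, $\Phi$ is open, so $\Phi(\Delta_R)$ is the required open neighbourhood of $p$.

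Finally comes the divergence of a radial image, which I expect to be the real obstacle. When $R<\infty$ I take $a_0=e^{\mathbf{i}\theta_0}$: the curve $\Gamma_{a_0}(s)=\exp_p(s\,e^{\mathbf{i}\theta_0})$, $0<s<R$, is an \emph{inextensible} geodesic of finite length, and a standard argument (if it met a fixed compact set along some $s_n\to R$, finite length would make it Cauchy, hence convergent to an interior point, hence extendable past $R$) shows it must leave every compact subset of $\Sigma$, i.e.\ is divergent. The delicate case is $R=+\infty$, where all geodesics from $p$ are complete and $(\Sigma,d\sigma^2)$ is a complete, non-compact flat surface; here divergence fails for closed or recurrent directions, so I would instead produce a divergent ray by the classical limiting construction: choose $q_n\in\Sigma$ escaping every compact set (so $\mathrm{dist}(p,q_n)\to\infty$ by Hopf--Rinow), join $p$ to $q_n$ by minimising geodesics, and pass to a limit $\theta_n\to\theta_0$ of their initial directions; the limit geodesic $\exp_p(s\,e^{\mathbf{i}\theta_0})$ satisfies $\mathrm{dist}\bigl(p,\exp_p(s\,e^{\mathbf{i}\theta_0})\bigr)=s$ for all $s$, hence is a ray and divergent. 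In both cases the length of the divergent ray equals $R$, giving the claimed $a_0$ and completing the construction. The crux is thus the complete case, where the divergent direction must be argued globally rather than read off from a finite escape radius.
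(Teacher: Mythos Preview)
The paper does not supply its own proof of this lemma; it is quoted from Fujimoto's monograph \cite{Fu1993} and invoked as a black box in the proof of Theorem~\ref{great-Picard1}. So there is no in-paper argument to compare against.

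Your argument is correct. Fujimoto's original proof is complex-analytic rather than Riemannian: a conformal flat metric is locally $|h(z)|^{2}|dz|^{2}$ with $h$ holomorphic (flatness forces $\log|h|$ to be harmonic), so the primitive $w=\int h\,dz$ gives a local holomorphic isometry to $(\C,|dw|^{2})$, and $\Phi$ is then built by analytically continuing this chart along radial paths from $p$, with $R$ the supremum of radii over which the continuation succeeds; the maximal ray in the direction where continuation first breaks down is the divergent one. Your exponential-map construction is the same object in Riemannian language --- for a flat conformal metric the exponential map \emph{is} this holomorphic developing map --- so the two approaches coincide in substance. Your version has the virtue of making the dichotomy $R<\infty$ versus $R=\infty$ explicit and of handling the complete case cleanly via the standard ray-extraction argument; in the applications (both Fujimoto's and the present paper's) the auxiliary metric $d\sigma^{2}$ is never complete, so that branch is in practice vacuous, which is why the cited sources do not dwell on it.
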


\begin{proof}[{\it Proof of Theorem \ref{great-Picard1}}] 
Suppose that the Gauss map $g$ attains four distinct values ${\alpha}_{1}, \cdots, {\alpha}_{4}$ only a finite number of times. 
We may assume that ${\alpha}_{4}= \infty$ after a suitable M\"obius transformation. 
Then the assumption of the theorem implies 
that outside a compact subset $D$ in $M$, $g$ is holomorphic and omits three values ${\alpha}_{1}$, ${\alpha}_{2}$, ${\alpha}_{3}$. 
We choose a positive number $\eta$ with $0< \eta < 1/4$ and set $\lambda := 1/(2-4\eta)$. Now we define a new metric 
\begin{equation}\label{eq1:Thm1-4}
\displaystyle d{\sigma}^{2}=|h|^{\frac{2}{1-\lambda}} \biggl( \frac{1}{|g'_{z}|} 
{\prod}_{j=1}^{3}\biggl( \dfrac{|g -{\alpha}_{j}|}{\sqrt{1+|{\alpha}_{j}|^2}}{\biggr)}^{1-\eta} 
\biggr)^{\frac{2\lambda}{1-\lambda}}|dz|^{2} 
\end{equation}
on the set $M':=\{p\in M\backslash D \, ; \, g'_{z}(p)\not=0\}$. 
Since $h$ and $g$ are holomorphic, $d{\sigma}^{2}$ is flat and can be smoothly extended over $D$. 
We thus obtain the metric $d{\sigma}^{2}$ on $M'':= M'\cup D$ which is flat outside the compact set $D$. 

We prove that $d{\sigma}^{2}$ is complete on $M''$. This will be proved by contradiction. 
If $d{\sigma}^{2}$ is not complete on $M''$, then there exists a divergent curve $\gamma (t)$ on $M''$ 
with finite length. By removing an initial segment, if necessary, we may assume that there exists a positive 
distance $d$ between the curve $\gamma$ and the compact set $D$. Thus $\gamma \colon [0, 1)\to M'$ and, since 
$\gamma$ is divergent on $M''$ with finite length, the curve $\gamma (t)$ tends to either a point where $g'_{z}=0$ 
or else the boundary of $M$ as $t\to 1$. However, the former case cannot occur. The reason is as follows. We assume that 
$\gamma (t)$ tends to a point $p_{0}$ where $g'_{z}=0$ as $t\to 1$. Taking a local complex coordinate $\zeta := g'_{z}$ 
in a neighborhood of $p_{0}$ with $\zeta (p_{0})=0$, we can write 
$$
d{\sigma}^{2}=|\zeta|^{-2\lambda /(1-\lambda)} w|d\zeta|^{2}
$$
for some positive smooth function $w$. Since $\lambda /(1-\lambda)> 1$, we have 
$$
\int_{\gamma} d\sigma \geq \widetilde{C}\int_{\gamma}\dfrac{|d\zeta|}{|\zeta|^{\lambda /(1-\lambda)}}= +\infty, 
$$
where $\widetilde{C}$ is some positive number. It contradicts that $\gamma$ has finite length. 
We conclude that $\gamma (t)$ must tend to the boundary of $M$ when $t\to 1$. 

Then we choose a number $t_{0}\in [0, 1)$ such that the length of $\gamma ([t_{0}, 1))$ with respect to the metric $d{\sigma}^{2}$ 
is less than $d/3$. Since $d{\sigma}^{2}$ is flat, by Lemma \ref{Lemma2-2}, there exists an isometry $\Phi$ satisfying 
$\Phi (0)=\gamma (t_{0})$ from a disk $\triangle_{R}=\{w\in\C \,;\, |w|<R \}$ ($0< R \leq +\infty$) with the standard Euclidean 
metric $ds^{2}_{\bf E}$ onto an open neighborhood of the point $\gamma (t_{0})$ with the metric $d{\sigma}^{2}$, such that, for a 
point $a_{0}$ with $|a_{0}|=1$, the $\Phi$-image $\Gamma_{a_{0}}$ of the line segment $L_{a_{0}}:=\{w:=a_{0}s\,;\, 0<s<R \}$ 
is divergent in $M'$. Since the length of $\gamma ([t_{0}, 1))$ is less than $d/3$ and $\gamma$ is divergent curve in $M$, 
we have $R\leq d/3$. Hence the image $\Phi (\triangle_{R})$ must be bounded away from $D$ by a distance of at least $2d/3$. 
Moreover, since the zeros of $g'_z$ have been shown to be infinitely far away in the metric, $\Gamma_{a_{0}}$ must actually 
go to the boundary of $M$. 

For brevity, we denote the function $g\circ \Phi$ on $\triangle_{R}$ by $g$ in the following. 
Since ${\Phi}^{\ast}d{\sigma}^{2}=|dz|^{2}$, we get by (\ref{eq1:Thm1-4}) that 
\begin{equation}\label{eq2:Thm1-4}
\displaystyle |h|=\biggl(|{g}'_{z}|\prod_{j=1}^{3}
\biggl(\frac{\sqrt{1+|{\alpha}_{j}|^{2}}}{|g -{\alpha}_{j}|} \biggr)^{1-\eta} \biggr)^{\lambda}\,.
\end{equation}
By Lemma \ref{Lemma2-1}, we have 
\begin{eqnarray*}
{\Phi}^{\ast}ds    &=& |h|\sqrt{1+|g|^{2}}|dz| \\
                   &=& \displaystyle \biggl( |{g}'_{z}|(1+|g|^{2})^{1/2\lambda} \prod_{j=1}^{3}
                          \biggl( \frac{\sqrt{1+|{\alpha}_{j}|^{2}}}{|g -{\alpha}_{j}|}\biggr)^{1-\eta}\biggr)^{\lambda} |dz| \\
                   &=& \displaystyle \biggl( \dfrac{|{g}'_{z}|}{(1+|g|^{2})\prod_{j=1}^{4} |g, {\alpha}_{j}|^{1-\eta}} \biggr)^{\lambda}|dz|  \\
                   &\leq& (C')^{\lambda}\biggl(\dfrac{R}{R^{2}-|z|^{2}} \biggr)^{\lambda}|dz|. 
\end{eqnarray*}
Then, for the length $L$ of ${\Gamma}_{a_{0}}$ with respect to the metric $ds^{2}$, we obtain 
$$
L= \int_{{\Gamma}_{a_{0}}} ds=\int_{L_{a_{0}}} {\Phi}^{\ast}ds\leq (C')^{\lambda}\int_{L_{a_{0}}} 
\biggl{(} \frac{R}{R^{2}-|z|^{2}}\biggr{)}^{\lambda}|dz|\leq (C')^{\lambda}\dfrac{R^{1-\lambda}}{1-\lambda} <+\infty 
$$
because $1/2< \lambda < 1$. In particular, the length of $L$ is finite. 
We thus show that if $d{\sigma}^{2}$ were not complete on $M''$, then we could find a divergent curve on $M$ 
with finite length in the original metric $ds^{2}$, that is, $M$ would not be complete. 
However it contradicts the assumption. Hence we conclude that $d{\sigma}^{2}$ is complete on $M''$. 

Since $d{\sigma}^{2}$ is also flat outside a compact set, the total curvature of $d{\sigma}^{2}$ is finite. 
Then, by the Huber theorem \cite{Hu1957}, $M''$ is finitely connected. We thus show that $g'_{z}$ 
can have only a finite number of zeros and $M$ is finitely connected. Moreover, by \cite[Theorem 2.1]{Os1963}, 
each annular end of $M''$, hence of $M$, is conformally equivalent to a punctured disk. Therefore the Riemann surface $M$ 
must be conformally equivalent to $\overline{M}\backslash \{p_{1}, \cdots, p_{k}\}$, where 
$\overline{M}$ is a closed Riemann surface and $p_{j}\in \overline{M}$ $(j=1, \ldots, k). $ 
In a neighborhood of each of $p_{j}$, $g$ is holomorphic 
and omits three values. By the Picard great theorem, $g$ cannot have an essential singularity, but must have at most a pole. 
Hence $g$ extends to a meromorphic function on $\overline{M}$. 
Then we have 
$$
\displaystyle \int_{M} K_{ds^{2}}\, dA= -\dfrac{1}{2} \int_{M} \biggl{(}\dfrac{2|g'_{z}|}{1+|g|^{2}} \biggr{)}^{2}du\wedge dv=-2\pi d, 
$$
where $z=u+{\bf i}v$ and 
$d$ denotes the degree of $g$ considered as a map on $\overline{M}$. 
In particular, $M$ must have finite total curvature. 
\end{proof} 


\end{document}